\newcommand{\comm}[1]{}
\DeclareRobustCommand{\SkipTocEntry}[5]{}
\newtheorem{theorem}{Theorem}[section]
\newtheorem*{theorem*}{Theorem} 
\newtheorem{lemma}[theorem]{Lemma}
\newtheorem{proposition}[theorem]{Proposition}
\newtheorem{conjecture}[theorem]{Conjecture}
\newtheorem{question}[theorem]{Question}
\theoremstyle{definition}
\newtheorem{definition}[theorem]{Definition}
\theoremstyle{remark}
\newtheorem{remark}[theorem]{Remark}
\theoremstyle{definition}
\newcounter{cl}
\newtheorem{claim}[cl]{Claim}
\newtheorem*{claim*}{Claim}
\newtheorem{subclaim}{Subclaim}
\newtheorem*{subclaim*}{Subclaim}
\theoremstyle{remark}
\newtheorem*{case*}{Case}
\newtheorem*{subcase*}{Subcase}
\newcommand{\crit}{\ensuremath{\operatorname{crit}} }
\newcommand{\lh}{\ensuremath{\operatorname{lh}} }
\newcommand{\Ult}{\ensuremath{\operatorname{Ult}}}
\newcommand{\tUlt}{\ensuremath{\overline{\operatorname{Ult}}}}
\newcommand{\nlhd}{\centernot{\lhd}}
\newcommand{\st}{\mid}
\newcommand{\cf}{\ensuremath{\operatorname{cf}}}
\newcommand{\id}{\ensuremath{\operatorname{id}}}
\newcommand{\dom}{\ensuremath{\operatorname{dom}}}
\newcommand{\mo}{\triangleleft}
\renewcommand{\l}{\langle}
\renewcommand{\r}{\rangle}
\newcommand{\uhr}{\upharpoonright}
\newcommand{\res}{\uhr}
\title{Infinite decreasing chains in the Mitchell order}
\author{Omer Ben-Neria}
\address{Omer Ben-Neria, Einstein Institute of Mathematics, The Hebrew University of Jerusalem. Jerusalem 91904, Israel.}
\email{omer.bn@mail.huji.ac.il}
\author{Sandra M\"uller}
\address{Sandra M\"uller, Institut f\"ur Mathematik, Universit\"at
  Wien. Kolingasse 14-16, 1090 Wien, Austria.}
\email{mueller.sandra@univie.ac.at}
\date{\today}
\begin{document}

\maketitle

\begin{abstract}
  It is known that the behavior of the Mitchell order substantially changes at the level of rank-to-rank extenders, as it ceases to be well-founded. While the possible partial order structure of the Mitchell order below rank-to-rank extenders is considered to be well understood, little is known about the structure in the ill-founded case. The purpose of the paper is to make a first step in understanding this case, by studying the extent to which the Mitchell order can be ill-founded.
  Our main results are (i) in the presence of a rank-to-rank extender there is a transitive Mitchell order decreasing sequence of extenders of any countable length, and (ii) there is no such sequence of length $\omega_1$.
\end{abstract}

\section{Introduction}

Extenders are combinatorial objects which play a fundamental role in capturing the strength of various large cardinal axioms, and specifically in capturing the strength of elementary embeddings. 
Given an elementary embedding $j : V \rightarrow M$ with critical point $\kappa$, and an ordinal $\lambda > \kappa$, the \emph{$(\kappa,\lambda)$-extender $E$ derived from $j$} is a system of ultrafilters  
$E = (E_a \st a \in [\lambda]^{<\omega})$, where each $E_a$ is
given by 
\[ X \in E_a \text{ iff } X \subseteq [\xi]^{|a|} \wedge a \in j(X), \] 
where  $\xi \geq \kappa$ is the least ordinal such that $\lambda \leq j(\xi)$. 
For $a \in [\lambda]^{<\omega}$, $\mu_a$
  denotes the least $\mu$ such that $a \subseteq j(\mu)$.
  Suppose that $a \subseteq b$ are elements of $[\lambda]^{<\omega}$, 
  where $|b| = n$ and $I \subseteq n$ is the set of indices (given by the canonical order on $\lambda$) for which $b \uhr I = a$. 
  We note that the map $\pi_{b,a} : [\mu_b]^{|b|} \to [\mu_a]^{|a|}$ given by 
  $\pi_{b,a}(x) = x\uhr I$ satifies that for every $X \in E_a$,  $\pi_{b,a}^{-1}(X) \in E_b$. 
Each ultrafilter $E_a$ is $\kappa$-complete, and the system of ultrafilters naturally gives rise to a canonical system of ultrapowers $\Ult(V,E_a)$.
The manner we derived each $E_a$ from $j$ implies that $\Ult(V,E_a)$ is isomorphic to the subclass $X_a$ of $M$ given by $X_a = \{ j(f)(a) \mid f : [\mu_a]^{|a|} \to V\}$.
The functions $\pi_{b,a}$ allow us to form a natural direct limit of the system of ultrapowers, denoted by $\Ult(V,E)$, which is isomorphic to the subclass $X_E = \bigcup_{a \in [\lambda]^{<\omega}} X_a$ of $M$. 
We denote the transitive collapse of $\Ult(V,E)$ (which is the same as the transitive collapse of $X_E$) and the resulting ultrapower embedding by $i_E : V \to M_E$. It follows from the description of $X_E$ that if the original  embedding $j : V \to M$  is $\lambda$-strong (i.e., $V_\lambda \subseteq M$) then $V_\lambda \subseteq M_E$. In that sense, we see that the ultrapower of $V$ by the extender $E$ captures the strength of $j : V \to M$ up to $\lambda$. The properties of an extender $E = (E_a, \pi_{b,a} \st a \subseteq b \in [\lambda]^{<\omega})$ as a system of ultrafilters with suitable connecting maps can be formulated directly, without the need of an ambient embedding $j : V \to M$. 
We refer the reader to \cite[Section 26]{Ka08} for an extensive introduction to the theory of extenders.\\

\noindent
A fundamental notion in the study of extenders is the one of the Mitchell order. 

\begin{definition} Let $E,E'$ be two extenders. We write $E \mo E'$ if $E$ is represented in the (well-founded) ultrapower of $V$ by $E'$. 
\end{definition}

The Mitchell order $\mo$ was introduced by Mitchell in \cite{Mit74} to construct canonical inner models with many measurable cardinals. The Mitchell order, which was initially introduced as an ordering on normal measures,  has been extended to extenders and plays a significant role in inner model theory.
As a prominent notion in the theory of large cardinals, the study of the Mitchell order and its structure has expanded in recent decades. 
The behaviour of the Mitchell order on extenders depends on the type of extenders in consideration and naturally becomes more complicated when restricted to stronger types of extenders. A fundamental dividing line in the behaviour of the Mitchell order is its well-foundedness:
Mitchell (\cite{Mit83}) has shown that $\mo$ is well-founded when restricted to normal measures. 
The question of the well-foundedness of $\mo$ was further studied by Steel \cite{St93}, and Neeman \cite{Ne04}, who showed that it fails exactly at the level of rank-to-rank extenders.

\begin{definition}\label{def:rtr}
 Let $E$ be an extender. We say $E$ is a
  \emph{rank-to-rank extender} iff assuming $\lambda > \crit(E)$ is least
  such that $i_E(\lambda) = \lambda$, then
  $V_\lambda \subseteq M_E$. 
\end{definition}
Due to their similarity with embeddings $j : V \to M$ with $V_{\lambda+2} \subseteq M$, which have been shown by Kunen to be inconsistent with ZFC, the large cardinal strength witnessed by rank-to-rank extenders is considered to be located near the top of the large cardinal hierarchy. More precisely, rank-to-rank extenders naturally arise from the large cardinal axiom I2.

The known dividing line of well-foundedness naturally breaks the question of the general possible behaviour into two (i) which well-founded partial orderings can be isomorphic to the Mitchell order on measures/extenders below the rank-to-rank level? and (ii) which ill-founded partial orderings can be isomorphic to the Mitchell order on a set of rank-to-rank extenders?\footnote{
Beyond the possible (partial) ordering structure of the Mitchell order, the investigation can be further extended to non-transitive relations, as the Mitchell order need not be transitive in general (see \cite{St93}). This direction is not developed in this paper.
}

Concerning question (i), the possible structure of the Mitchell order on normal measures has been extensively studied in
 \cite{Mit83}, \cite{Bal85}, \cite{Cum93}, \cite{Cum94},  \cite{Wit96}, \cite{BN16}, \cite{BN15}.  
 It has been shown in \cite{BN15} that it is consistent for every well-founded partial ordering to be isomorphic to the restriction of $\mo$ to the set of normal measures on some measurable cardinal $\kappa$ (the exact consistency strength of this property has not been discovered).
 
 In this work, we make a first step towards expanding the study of the Mitchell order in the ill-founded case, and address question (ii).  
 Specifically, we focus on the extent to which the well-foundedness of the Mitchell order fails on rank-to-rank extenders, by considering possible ordertypes of infinite decreasing chains in $\mo$. 
 The main results of this paper are the following two theorems. 

 \begin{theorem}\label{thm:ctbleseq} Assume there exists a rank-to-rank extender $E$. Then for every countable ordinal $\gamma$  there is a sequence of rank-to-rank extenders of length $\gamma$, $(E_\alpha \st \alpha < \gamma)$, on which the Mitchell order is transitive and strictly decreasing.
\end{theorem}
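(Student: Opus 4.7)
The plan is to construct such sequences by induction on the countable ordinal $\gamma$. Fix a rank-to-rank extender $E$ with $j = j_E : V \to M$, $\kappa = \crit(E)$, and $\lambda$ the least fixed point above $\kappa$; recall $V_\lambda \subseteq M$. The base case rests on the classical observation underlying the ill-foundedness of $\mo$ at the rank-to-rank level: $j(E) \in M = \Ult(V,E)$ and, by elementarity of $j$ together with $j(\lambda) = \lambda$, it is again rank-to-rank, so $j(E) \lhd E$. Recursively defining $E_0 = E$ and $E_{n+1} = j_{E_n}(E_n)$ thus produces a length-$\omega$ decreasing chain of rank-to-rank extenders, which will serve as the length-$\omega$ base of the induction.

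For the inductive step, the central idea is to use elementarity of $j$ to reflect shorter sequences into $M$: since $j(\gamma) = \gamma$ for every countable $\gamma < \kappa$, the statement ``for every $\delta < \gamma$ there exists a transitive $\mo$-decreasing sequence of rank-to-rank extenders of length $\delta$,'' once established in $V$ by the induction hypothesis, transfers to $M$. Taking such sequences inside $M$ and prepending the original extender $E$ should yield the desired length-$\gamma$ chain in $V$. At successor stages $\gamma = \delta+1$, one reflects a length-$\delta$ chain and tops it with $E$; at limit stages, one enumerates a cofinal $\omega$-sequence $\delta_n \nearrow \gamma$, builds coherent chains of those lengths inside $M$, and amalgamates them through the iterated structure of $j$ (essentially, realizing each $(\delta_{n+1} - \delta_n)$-block at the $n$-th stage of the critical sequence $\kappa, j(\kappa), j^2(\kappa), \dots$) to produce a single chain of length $\gamma$.

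The main obstacle is the transitivity condition. Since $\mo$ is not transitive in general on extenders, the sequence must be assembled so that pairwise Mitchell-relations cohere throughout. Concretely, for each pair $\alpha < \beta < \gamma$ one must verify that $E_\beta$, originally located in some later ultrapower, is in fact represented in $\Ult(V, E_\alpha)$. This requires tracking carefully how iterated images of $E$ sit inside each other's ultrapowers, and the key leverage is precisely the rank-to-rank property $V_\lambda \subseteq \Ult(V, E_\alpha)$, which, together with elementarity, ensures that the structural information needed to recover $E_\beta$ is already available at the level of $E_\alpha$. Propagating this property through the induction — so that the ``tops'' of reflected sequences in $M$ remain genuinely $\mo$-below $E$ in $V$, and not only in $M$ — is where the heart of the argument must lie.
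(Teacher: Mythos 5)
Your base case is the standard folklore construction and your diagnosis that transitivity is the crux is correct; your limit-stage idea of realizing successive blocks higher and higher along the critical sequence is also essentially the strategy the paper uses. But there is a genuine gap at successor stages. In a $\lhd$-decreasing sequence the index increases as the extenders get \emph{smaller}, so ``topping'' a reflected chain of length $\delta$ with $E$ adds a new \emph{maximum} and produces a chain of order type $1+\delta$, which equals $\delta$ whenever $\delta$ is infinite: you gain nothing. To pass from $\delta$ to $\delta+1$ you must extend the chain at the \emph{bottom}, i.e.\ exhibit a single new extender that is Mitchell order below all $\delta$-many extenders already constructed; when $\delta$ is a limit ordinal this is exactly the hard point, and no amount of prepending reaches it. The paper's solution is to strengthen the induction hypothesis: for every $\gamma$ and every prescribed $\kappa<\lambda$ it builds a sequence of length $\gamma+1$ ``guided by an internal iteration,'' together with an embedding $j^*_{0,\gamma}$ satisfying $\crit(j^*_{0,\gamma})>\kappa$ and $j^*_{0,\gamma}(\lambda)=\lambda$; the bottom extender $j^*_{0,\gamma}(E)$ is then shown to lie below all its predecessors via commuting squares of internal ultrapowers (Claim~\ref{cl:internalultrapowers}, together with Lemmas~\ref{lemma:subsets.of.Vlambda} and~\ref{lem:MOintheultrapower}).

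At limit stages your sketch also omits the parts that carry the weight: (i) cross-block transitivity --- an extender in the $(n{+}1)$-st block must be Mitchell order below \emph{every} extender of \emph{every} earlier block, which is what the commuting-diagram bookkeeping is for; (ii) well-foundedness of the direct limit $M_\gamma$ and of $\Ult(V,E_\gamma)$, for which the paper uses the minimal-counterexample reflection argument plus an explicit factor embedding $\pi\colon \Ult(V,E_\gamma)\to\Ult(M_\gamma,E_\gamma)$; and (iii) the requirement that the tail of the concatenated iteration belong to each intermediate model so that the iteration is genuinely internal --- the paper arranges this by always choosing the $<_w$-least witness for a fixed well-ordering $<_w$ of $H_{\lambda^+}$ and its images. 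Note also that transferring the pairwise Mitchell relations of a chain living in $M$ back up to $V$ is not automatic and needs an absoluteness argument of the kind given in Lemma~\ref{lem:MOintheultrapower}. The parameter $\kappa$ in the strengthened hypothesis is not optional: it is what forces the critical points of the blocks to climb cofinally to $\lambda$, so that the limit embedding fixes $\lambda$ and the limit extender is again rank-to-rank.
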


\begin{theorem}\label{thm:no.unctblseq}
 There is no $\omega_1$-sequence of extenders which is strictly decreasing and transitive in the Mitchell order.
\end{theorem}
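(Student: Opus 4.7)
Suppose toward a contradiction that $\vec E = (E_\alpha \st \alpha < \omega_1)$ is transitive and strictly $\mo$-decreasing, so $E_\beta \mo E_\alpha$ for all $\alpha < \beta < \omega_1$. Write $j_\alpha \colon V \to M_\alpha := \Ult(V,E_\alpha)$ and $\kappa_\alpha := \crit(E_\alpha)$. Each $\kappa_\alpha$ is measurable in $V$ (hence $> \omega_1$), so every $M_\alpha$ is $\omega_1$-closed in $V$; combined with $E_\beta \in M_\alpha$ for all $\beta > \alpha$ (by transitivity), this yields that the full tail $(E_\beta)_{\alpha<\beta<\omega_1}$ is, as a single object, an element of $M_\alpha$. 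Moreover, by Neeman's result that $\mo$ is well-founded on extenders below the rank-to-rank level, all but finitely many $E_\alpha$ must be rank-to-rank, and I will assume without loss that every $E_\alpha$ is rank-to-rank; in particular $V_{\lambda_\alpha} \subseteq M_\alpha$ with $\lambda_\alpha := \sup_n j_\alpha^n(\kappa_\alpha)$.

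The plan has two steps. The first is an absoluteness step: I want to argue that $M_\alpha$ internally views the tail $(E_\beta)_{\alpha < \beta < \omega_1}$ as a transitive, strictly $\mo$-decreasing $\omega_1$-sequence of extenders. Concretely, for each $\alpha < \beta_1 < \beta_2$ the $V$-level representation $E_{\beta_2} = j_{E_{\beta_1}}(f)(a)$ witnessing $E_{\beta_2} \in \Ult(V,E_{\beta_1})$ has to be replaced by one with $f \in M_\alpha$. Since $E_{\beta_1}, E_{\beta_2}$ already lie in $M_\alpha$ and the rank-to-rank closure $V_{\lambda_\alpha} \subseteq M_\alpha$ contains the natural domains and codomains of the representing functions, this upgrade can be effected.

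The second step is an iteration terminating in a Kunen-style contradiction. Set $N_0 = V$, $N_{\alpha+1} = \Ult(N_\alpha, E_\alpha)$ — valid because the absoluteness step, applied recursively, says that $E_\alpha$ is still an extender in $N_\alpha$ and that the remaining tail is Mitchell-decreasing inside $N_\alpha$ — and take direct limits at limit stages. The iteration should be iterable, i.e., its direct limit well-founded, thanks to the coherence supplied by Mitchell decrease. At stage $\omega_1$ one obtains an elementary embedding $i \colon V \to N_{\omega_1}$ whose critical sequence enumerates the $\kappa_\alpha$ and whose accumulated closure gives $V_\lambda \subseteq N_{\omega_1}$ for $\lambda = \sup_{\alpha < \omega_1} \lambda_\alpha$, contradicting the Kunen inconsistency.

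The technical heart, and the main obstacle, is the absoluteness step: showing that the Mitchell-order witnesses for the tail extenders can always be chosen inside $M_\alpha$, and that the parallel property propagates through every successor stage of the iteration. Verifying iterability of the $\omega_1$-long tower — equivalently, well-foundedness of the direct limits at limit stages — is a secondary but also nontrivial ingredient, and is where the strong closure properties coming from the rank-to-rank hypothesis on each $E_\alpha$ are expected to be essential.
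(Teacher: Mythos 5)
There is a genuine gap, and it occurs at the very first load-bearing claim. You assert that since $\kappa_\alpha=\crit(E_\alpha)$ is measurable, $M_\alpha=\Ult(V,E_\alpha)$ is $\omega_1$-closed in $V$, and hence contains the whole tail $(E_\beta \st \alpha<\beta<\omega_1)$ as an element. This is false for rank-to-rank extenders: $\Ult(V,E_\alpha)$ is not even closed under $\omega$-sequences. Indeed $\lambda_\alpha=\lh(E_\alpha)=\sup_n\kappa_n^{E_\alpha}$ has cofinality $\omega$, each $j_\alpha``\kappa_n^{E_\alpha}$ lies in $M_\alpha$, but the $\omega$-sequence $(j_\alpha``\kappa_n^{E_\alpha} \st n<\omega)$ cannot lie in $M_\alpha$, since otherwise $j_\alpha``\lambda_\alpha\in M_\alpha$ and Kunen's argument gives a contradiction. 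Measurability of $\kappa_\alpha$ yields $\kappa_\alpha$-closure of the ultrapower by a \emph{measure} on $\kappa_\alpha$, not of the ultrapower by a long $(\kappa_\alpha,\lambda_\alpha)$-extender. So there is no reason the tail is an element of $M_\alpha$; this is exactly the obstruction the paper flags ("the sequence $(E_m\st 1\le m<\omega)$ need not belong to $M_{E_0}$") and circumvents by passing to a tree of \emph{finite} approximations inside the ultrapower and invoking absoluteness of well-foundedness. Your "absoluteness step" for a single pair $E_{\beta_2}\lhd E_{\beta_1}$ is fine (it is the paper's Lemma \ref{lem:MOintheultrapower}), but it does not upgrade to membership of the whole sequence.

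The endgame is also not a contradiction as stated: an embedding $i\colon V\to N$ with $V_\lambda\subseteq N$, $i(\lambda)=\lambda$, and $\lambda$ the supremum of the critical sequence is precisely I2, the hypothesis under which the whole paper operates, not the Kunen inconsistency (which requires $j``\lambda\in M$ or $V_{\lambda+2}\subseteq M$). Moreover, after the standard reduction all the $\lambda_\alpha$ are equal to a single $\lambda$ of cofinality $\omega$, so the $\omega_1$ many critical points $\kappa_\alpha$ all live below this fixed $\lambda$; they cannot "accumulate" past it. That observation is in fact the key to the actual proof: by pressing down, uncountably many $\kappa_0^{E_\alpha}$ lie below a fixed $\rho_{n^*}<\lambda$, and the first $\omega$ of these give a transitive strictly decreasing $\omega$-sequence whose critical points are bounded below $\lambda$, contradicting the transitive case of Steel's conjecture (Theorem \ref{thm2}), which is itself proved by a minimal-counterexample reflection into $\Ult(V,E_0)$ using the finite-approximation tree. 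I would redirect your effort toward that two-step structure rather than the $\omega_1$-length internal iteration, whose limit-stage well-foundedness would in any case be a serious additional problem.
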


Our presentation of the proof of Theorem \ref{thm:no.unctblseq} goes through a proof of a weak version of Steel's conjecture, which addresses \emph{transitive} $\omega$-sequences of extenders.\footnote{See Section \ref{sec:omega1seq} for a formulation of Steel's conjecture.} 
This presentation replaces a previous ad-hoc proof. The authors would like to thank Grigor Sargsyan for pointing out the connection with Steel's conjecture, which led to the current concise proof of Theorem \ref{thm:no.unctblseq}.  The (full) conjecture was recently proved by Goldberg (\cite{Go}) building on his remarkable study of the internal relation.\\ 

\textbf{Acknowledgements} \\
This research was initiated whilst the second-listed author visited the first-listed author at the Hebrew University of Jerusalem in February 2019. She wishes to thank the Hebrew University of Jerusalem for its hospitality.
The first-listed author was partially supported by the Israel Science Foundation Grant 1832/19. The second-listed author gratefully acknowledges funding from L'OR\'{E}AL Austria, in collaboration with the Austrian UNESCO Commission and in cooperation with the Austrian Academy of Sciences - Fellowship \emph{Determinacy and Large Cardinals}. 
The authors would like to thank Grigor Sargsyan and the referee for many valuable comments and suggestions, which greatly improved the presentation of the paper. 

\section{Basic definitions and observations}\label{sec:basicdefinitions}

  
  For notational clarity it will be sometimes convenient for us to use the notation $\tUlt(V,E)$ for $M_E$, the transitive collapse of $\Ult(V,E)$.
 For an extender $E$ in a transitive class $N$, write $M_E^N$ for the transitive collapse of $\Ult(N,E)$ and $i_E^N : M \rightarrow M_E^N$ for the corresponding ultrapower embedding. For a rank-to-rank extender $E$ we write $\lambda^E$ for the least $\lambda > \crit(E)$ such that $i_E(\lambda) = \lambda$. Moreover, we write $\kappa^E_0 = \crit(E)$ and
  $\kappa^E_{n+1} = i_E(\kappa^E_n)$ for $n < \omega$, and call $(\kappa^E_n \st n < \omega)$ the \emph{critical sequence of $E$}. 
  For any rank-to-rank extender $E$, $\lambda^E = \sup_{n<\omega} \kappa^E_n = \lh(E)$. \\ 
  For every $n < \omega$ let $E\res \kappa_n^E$ be the cutback of $E$ to the measures $E_a$, $a \in [\kappa_n^E]^{<\omega}$. 
  We have that $M_E = \bigcup_n X_n$ where \[X_n = \{ i_E(f)(a) \mid a \in [\kappa_n^E]^{<\omega}, f : [\mu_a]^{|a|} \to V \}.\] By taking $f = \id$, it is clear that $\kappa_n^E \subseteq X_n$. Let $N_n$ denote the transitive collapse of $X_n$ for each $n<\omega$. We have that $N_n$ is isomorphic to $M_{E \upharpoonright \kappa_n^E}$. Moreover, for each $n < \omega$, 
  $N_{n+1}$ contains $V_{\kappa_{n+1}^E}$ and in particular $i_E``\kappa_n^E \in N_{n+1}$. It follows from a standard argument that ${}^{\kappa_n^E}N_{n+1} \subseteq N_{n+1}$. As the critical point of the inverse of the collapse map $N_{n+1} \rightarrow X_{n+1}$ is $>\kappa_n^E$, this implies ${}^{\kappa_n^E}X_{n+1} \subseteq X_{n+1}$.
  
  The following observations will be useful in comparing extenders in different ultrapowers. 
  \begin{lemma}\label{lemma:subsets.of.Vlambda}
  Assume that $N = \bigcup_{n<\omega} X_n$ is an increasing union of classes $X_n$, $N$ is transitive, and there exists a sequence of cardinals $\l \kappa_n \mid n < \omega\r$,
  such that ${}^{\kappa_n} X_{n+1} \subseteq X_{n+1}$ for all $n < \omega$. 
  Let $E \in N$ be a rank-to-rank extender of height $\lh(E) = \lambda = \bigcup_n \kappa_n$. 
  \begin{enumerate}
   \item For every $a \in [\lambda]^{<\omega}$ and $f : [\mu_a]^{|a|} \to N$ a function in $V$,  there exists a function $f_N \in N$ such that 
  $\{ \nu \in [\mu_a]^{|a|} \mid f(\nu) = f_N(\nu)\} \in E_a$. 
  \item Let $i_E : V \to M_E$ and $i^N_E : N \to M_E^N$ be the ultrapower embeddings by $E$ of $V$ and $N$, respectively. For every set $x \in N$, $i_E(x) = i^N_E(x)$. 
  \end{enumerate}
\end{lemma}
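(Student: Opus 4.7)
The plan for \emph{(1)} is to combine countable completeness of $E_a$ with the closure of the $N_n$. Given $a \in [\lambda]^{<\omega}$ and $f \colon [\mu_a]^{|a|} \to M$ in $V$, I would set
\[
X_m := \{\nu \in [\mu_a]^{|a|} \mid f(\nu) \in N_m\}
\]
for each $m < \omega$. The $X_m$ are increasing with union $[\mu_a]^{|a|}$, and $E_a$ is $\crit(E)$-complete and in particular countably complete, so some $X_m$ must lie in $E_a$. Picking $n \geq m$ with $\mu_a < \kappa_n$, the restriction $f \uhr X_m$ is a set of pairs of size at most $\mu_a < \kappa_n$ with entries in $N_n$; the closure ${}^{\kappa_n} N_n \subseteq N_n$ (applied after padding an enumeration up to length $\kappa_n$) then places $f \uhr X_m$ in $N_n \subseteq M$, and one takes $f_M := f \uhr X_m$.

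For \emph{(2)}, the plan is to deduce from \emph{(1)} that the natural map
\[
k \colon \Ult(M,E) \to \Ult(V,E), \qquad k([f]^M_E) := [f]^V_E \quad (f \in M),
\]
is an $\in$-isomorphism. Well-definedness, injectivity, and $\in$-preservation would follow from the usual ultrapower absoluteness: for $f, g \in M$ the conditions $\{\nu \mid f(\nu) = g(\nu)\} \in E_a$ and $\{\nu \mid f(\nu) \in g(\nu)\} \in E_a$ mention only objects in $M$ (namely $f$, $g$, and $E_a$) and hence are absolute between $M$ and $V$. Surjectivity is exactly the content of \emph{(1)}: every $[f]^V_E$ with $f \in V$ equals $[f_M]^V_E = k([f_M]^M_E)$ for the $f_M \in M$ produced by the first part. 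Since $\Ult(V,E)$ is well-founded, so is $\Ult(M,E)$, and the isomorphism $k$ between their transitive collapses must then be the identity by Mostowski's uniqueness. Applied to the constant function $c_x$ for $x \in M$ this gives $i_E^M(x) = k(i_E^M(x)) = i_E(x)$.

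The main subtlety to watch is keeping careful track of where objects and computations live, i.e., verifying absoluteness between $M$ and $V$ of the ingredients $E_a$ and the ``almost everywhere'' conditions; the hypothesis $E \in M$ is exactly what guarantees this, and once \emph{(1)} has been established, part \emph{(2)} becomes a Mostowski-collapse bookkeeping step rather than a genuine new obstacle.
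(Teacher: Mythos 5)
Your argument for part (1) is correct and is essentially the paper's own proof: use $\sigma$-completeness of $E_a$ to find $m$ with $X_m=\{\nu \mid f(\nu)\in N_m\}\in E_a$, then use the closure hypothesis ${}^{\kappa_n}N_n\subseteq N_n$ for a suitable $n$ to put $f\uhr X_m$ into $N_n\subseteq M$.

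Part (2), however, contains a genuine error: the map $k\colon \Ult(M,E)\to\Ult(V,E)$, $k([a,f]^M_E)=[a,f]^V_E$, is \emph{not} surjective in general, and surjectivity is not ``the content of (1).'' Part (1) only applies to functions $f\colon[\mu_a]^{|a|}\to M$, i.e.\ functions whose values lie in $M$; a class $[a,f]^V_E$ with $f$ taking values outside $M$ need not be in $\ran(k)$ at all. Indeed, in the paper's applications $M$ is a proper inner model (e.g.\ $M=\Ult(V,E_0)$), $\Ult(M,E)$ is a definable class of $M$ and hence contained in $M$, while $\Ult(V,E)$ is not, so the two ultrapowers cannot coincide and your conclusion via ``Mostowski's uniqueness applied to the isomorphism $k$'' does not go through. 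What is true, and what suffices, is that $\ran(k)$ is downward closed under the membership relation of $\Ult(V,E)$: if $[b,g]^V_E\in k([a,f]^M_E)$ with $f\in M$, then since $M$ is transitive we have $g(\nu)\in f(\nu)\subseteq M$ for $E$-almost all $\nu$, so $g$ agrees almost everywhere with a function into $M$ and hence, by part (1), with some $g_M\in M$; thus $[b,g]^V_E=k([b,g_M]^M_E)\in\ran(k)$. An injective, $\in$-preserving (in both directions, by absoluteness of the relevant measure-one sets, as you note) map from a well-founded extensional structure onto a transitive subclass of the transitive class $\Ult(V,E)$ is the transitive collapse, so the collapse of $\Ult(M,E)$ is $\ran(k)$ and $[a,f]^M_E=[a,f]^V_E$ for all $f\in M$; applying this to the constant function $c_x$ gives $i^M_E(x)=i_E(x)$. (The paper's version of (2) argues more directly: since $x\subseteq M$ by transitivity, every element $[a,f]^V_E$ of $i_E(x)$ comes from an $f$ into $x\subseteq M$, so part (1) applies and the elements of $i_E(x)$ and $i^M_E(x)$ correspond; this is the same repair, localized to $i_E(x)$.)
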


  \begin{proof}
  \begin{enumerate}
   \item As $N = \bigcup_n X_n$ is an increasing union, and $E_a$ is $\sigma$-complete,  there exists some $n$ such that the set
  $A_n = \{ \nu \in [\mu_a]^{|a|} \mid f(\nu) \in X_n\}$ belongs to $E_a$ and $\mu_a < \kappa_n^E$. 
  Since $X_{n+1}$ is closed under $\kappa_n^E$-sequences, it follows that $f\uhr A_n : [\mu_a]^{|a|} \to X_n$ belongs to $X_{n+1} \in N$.
  The claim follows. 
  
\item Let $\pi : \Ult(V,E) \rightarrow M_E$ and $\pi^N : \Ult(N,E) \rightarrow M_E^N$ denote the transitive collapse embeddings. We have that in fact for any $[a,f]_E$ and $[a,f_N]_E$ for $a \in [\lambda]^{<\omega}$, $f : [\mu_a]^{|a|} \to x$ in $V$ and $f_N : [\mu_a]^{|a|} \to x$ in $N$ with $\{ \nu \in [\mu_a]^{|a|} \mid f(\nu) = f_N(\nu)\} \in E_a$, \[ \pi([a,f]_E) = \pi^M([a,f_N]_E). \] This follows from a straightforward induction on the rank of $[a,f]_E$ using that for any $[b,g]_E \in [a,f]_E$, $g$ takes values in $N$, as $N$ is transitive and we may assume that $g \in N$ by the first part of the lemma. Now this implies $i_E(x) = i^N_E(x)$. 
  \end{enumerate}
  \end{proof}

 \begin{lemma}\label{lem:MOintheultrapower}
   Suppose $E_2,E_1,E_0$ are three rank-to-rank extenders of the same length
   $\lambda = \lambda^{E_i}$, $i = 0,1,2$, such that $E_2$ is Mitchell
   order below $E_1$, and both $E_2,E_1$ are Mitchell order below
   $E_0$. Then $M_{E_0}$ also witnesses that $E_2$ is Mitchell order
   below $E_1$.
 \end{lemma}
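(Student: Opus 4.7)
Let $M := \Ult(V, E_0)$. Since $E_1, E_2 \mo E_0$, both extenders lie in $M$, so the conclusion ``$\Ult(V, E_0) \models E_2 \mo E_1$'' is equivalent to $E_2 \in \Ult(M, E_1)$, which is what I would prove. The plan is to apply Lemma \ref{lemma:subsets.of.Vlambda} to $M$ using the natural decomposition $M = \bigcup_n N_n$ described before the lemma, with $N_n = \Ult(V, E_0 \res \kappa_n^{E_0})$ and $\kappa_n = \kappa_n^{E_0}$; the height condition $\lambda = \sup_n \kappa_n^{E_0} = \lambda^{E_1}$ holds, so Lemma \ref{lemma:subsets.of.Vlambda} is directly applicable to $M$ together with the rank-to-rank extender $E_1 \in M$ of height $\lambda$.

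Using $E_2 \mo E_1$ in $V$, I would pick $a \in [\lambda]^{<\omega}$ and $f \in V$ with $f : [\mu_a]^{|a|} \to V$ and $i_{E_1}(f)(a) = E_2$. The main work is to replace $f$ by a function $f^* \in M$ representing the same element modulo $E_{1,a}$. Once such an $f^*$ is found, the proof concludes immediately: Lemma \ref{lemma:subsets.of.Vlambda}(2) applied to $f^* \in M$ gives $i_{E_1}(f^*) = i^M_{E_1}(f^*)$ as functions, hence $i^M_{E_1}(f^*)(a) = i_{E_1}(f^*)(a) = i_{E_1}(f)(a) = E_2$, which exhibits $E_2 \in \Ult(M, E_1)$.

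To produce $f^*$, I would first arrange that $f$ takes values in $M$ on an $E_{1,a}$-large set. Rank considerations via \L{o}\'s's theorem (applied to the definable property ``$\mathrm{rank}(x) \leq \rho$'' for $\rho = \mathrm{rank}(E_2)$, which is only $O(1)$ above $\lambda$) let me cut $f$ to values of bounded rank; combined with $V_\lambda \subseteq M$ and the fact that the components $E_{2,c}$ of $E_2$ are subsets of $V_{\crit(E_2)+2} \subseteq V_\lambda$, the values of $f$ may be taken to lie in $M$ on an $E_{1,a}$-large set (modifying on a null set to the constant value $E_2 \in M$ elsewhere). Then Lemma \ref{lemma:subsets.of.Vlambda}(1) applied to the resulting function $[\mu_a]^{|a|} \to M$ directly yields $f^* \in M$ with $\{\nu : f(\nu) = f^*(\nu)\} \in E_{1,a}$, completing the construction.

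The main obstacle in carrying out this plan is the step that trims $f$ into $M$ on a large set, since ``$x \in M$'' is not, on the face of it, a first-order property of $V$ to which \L{o}\'s applies directly. The argument has to combine \L{o}\'s applied to definable subsets of bounded rank, the rank-to-rank property $V_\lambda \subseteq M$, and the coordinate-wise structure of the rank-to-rank extender $E_2$ to reduce the membership question to one about elements of low rank that are automatically contained in $M$.
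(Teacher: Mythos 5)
Your overall framing is reasonable---reduce the lemma to producing a single function $f^* \in M := \Ult(V,E_0)$ with $[a,f^*]_{E_1} = E_2$ and then use Lemma \ref{lemma:subsets.of.Vlambda} to transfer between the $V$- and $M$-ultrapowers by $E_1$---but the step you yourself flag as ``the main work'' is a genuine gap, and it is exactly the central difficulty of the lemma. The problem is that on an $E_{1,a}$-large set the values $f(\nu)$ are extenders of length exactly $\lambda$: if $h(\nu) = \lh(f(\nu))$ then $[a,h]_{E_1} = \lambda$, and by $\sigma$-completeness of $E_{1,a}$ no function that is $<\lambda$ on a large set can represent $\lambda$ (it would have to be below some $\kappa^{E_1}_n$ on a large set, giving $[a,h]_{E_1} < \kappa^{E_1}_{n+1}$). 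A length-$\lambda$ extender has rank roughly $\lambda+3$; it is not an element of $V_\lambda$, so $V_\lambda \subseteq M$ does not put it into $M$. Your appeal to the ``coordinate-wise structure'' only shows that each cutback $f(\nu)\res\kappa^{E_1}_n$ lies in $V_\lambda \subseteq M$; reassembling $f(\nu)$ from these cutbacks requires an $\omega$-union (equivalently, a $[\lambda]^{<\omega}$-indexed sequence of elements of $V_\lambda$), and $M$ is not closed under $\omega$-sequences at $\lambda$ (e.g.\ $i_{E_0}``\lambda \notin M$). So there is no reason why $\{\nu : f(\nu) \in M\}$ should be $E_{1,a}$-large, and Lemma \ref{lemma:subsets.of.Vlambda}(1) cannot be brought to bear.

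The paper gets around precisely this obstruction with an idea absent from your proposal: absoluteness of well-foundedness. One places in $M$ not the function $f$ but the tree $T$ of its finite approximations---pairs $(\tau,n)$ where $\tau$ takes values in extenders of length $\kappa^{E_1}_n$ and $[a,\tau]^{M}_{E_1}$ is the corresponding cutback of $E_2$; each such $\tau$ is essentially an element of $V_\lambda \subseteq M$, so $T$ is definable in $M$. The functions $f_n(\nu) = f(\nu)\res \kappa^{E_1}_n$ give a cofinal branch of $T$ in $V$, so $T$ is ill-founded, hence ill-founded in $M$, and a branch in $M$ assembles inside $M$ into a function $F \in M$ with $[a,F]^{M}_{E_1}=E_2$. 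Without this step, or some substitute for it, your argument does not close.
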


\begin{proof}
  The fact $E_2 \mo E_1$ means that $E_2$ is represented in the
  $V$-ultrapower by $E_1$, by a function $f$ and a generator $a$ of
  $E_1$. Take $k < \omega$ such that
  $a \in [\kappa_{k+1}^{E_1}]^{<\omega}$. 

  The reason it is not immediate that the relation
  $E_2 \lhd E_1$ also holds in $M_{E_0}$ is that the
  function $f$ need not belong to $M_{E_0}$. However, we can argue
  that $M_{E_0}$ does see some witnessing function $f^*$ by using
  approximations. Assume without loss of generality that $f(\nu)$ is an extender for every $\nu \in \dom(f)$. Indeed, notice that for every $n < \omega$, the
  function $f_n$ with $\dom(f_n) = \dom(f)$ such that for every $\nu$,
  $f_n(\nu)$ is the restriction of the extender $f(\nu)$ to length
  $\kappa_{n}^{E_1}$. Clearly, $[a,f_n]^V_{E_1}$ represents the cut back
  of $E_2$ to length $\kappa_{n+1}^{E_1}$. Moreover, $f_n$ belongs to
  $M_{E_0}$ for every $n < \omega$ since $V_\lambda \subseteq M_{E_0}$.

  Let $\mathcal{E}(\eta)$ for some ordinal $\eta$ denote the set of all $(\kappa,\eta)$-extenders. Now, working in $M_{E_0}$ and utilizing the fact that both $E_1,E_2$
  belong to the model, we consider the tree $T$ of all pairs
  $(\tau,n)$ such that
  $\tau : [\kappa_k^{E_1}]^{|a|} \to \mathcal{E}(\kappa_n^{E_1})$ satisfies
  that $[a,\tau]^{M_{E_0}}_{E_1}$ represents the restriction of $E_2$ to length
  $\kappa_{n+1}^{E_1}$. The tree order $<_T$ is given by
  $(\tau,n) <_T (\tau',n')$ if $n < n'$ and $\tau'(\nu)$ extends
  $\tau(\nu)$ for all $\nu \in \dom(\tau)$.  It is clear that a
  cofinal branch in $T$ translates to a function $F$ for which
  $[a,F]_{E_1}$ represents $E_2$, and vice versa. Therefore $f \in V$
  witnesses that $T$ has a cofinal branch in $V$, and thus, by absoluteness of well-foundedness, it must also have one in $M_{E_0}$.
\end{proof}

Steel gives in \cite{St93} a folklore example that for rank-to-rank
extenders the Mitchell order need not be well-founded. We recall it here because some of the ideas will be used later.

\begin{proposition}[Folklore]\label{prop:1stexample}
  Let $E$ be a rank-to-rank extender. Then there is a strictly decreasing sequence of length $\omega$ in the Mitchell order on which $\mo$ is transitive.
\end{proposition}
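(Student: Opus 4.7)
The plan is to construct $(E_n)_{n<\omega}$ by iterated self-application of the given rank-to-rank extender. Set $E_0 := E$ and, recursively, $E_{n+1} := i_{E_n}(E_n)$, where $i_{E_n} \colon V \to M_{E_n}$ is the $V$-ultrapower embedding of $E_n$. By induction each $E_n$ is a rank-to-rank extender in $V$ of critical point $\kappa_n^E$ and length $\lambda = \lambda^E$: elementarity of $i_{E_n}$ yields rank-to-rank-ness of $E_{n+1}$ inside $M_{E_n}$, and Lemma \ref{lemma:subsets.of.Vlambda} transfers this to $V$, since the fragments of $E_{n+1}$ measure only subsets of $V_\lambda$, which is shared by $V$ and $M_{E_n}$.

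Strict descent is immediate: $E_{n+1} = i_{E_n}(E_n) \in M_{E_n}$ by construction, so $E_{n+1} \mo E_n$ for every $n$.

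For transitivity I would prove by induction on $k \geq 1$ that $E_{n+k} \in M_{E_n}$ for every $n$. The base case $k=1$ is the previous paragraph. For the inductive step, assume $E_{n+k} \in M_{E_n}$. Since $E_n$ is rank-to-rank, $M_{E_n}$ admits the union decomposition $M_{E_n} = \bigcup_{l<\omega} N_l^{E_n}$ of the preamble, and $E_{n+k}$ is a rank-to-rank extender of length $\lambda$ living inside it. Part (2) of Lemma \ref{lemma:subsets.of.Vlambda}, applied with the rank-to-rank extender $E_{n+k}$ and the ambient submodel $M_{E_n}$, gives $i_{E_{n+k}}^V(E_{n+k}) = i_{E_{n+k}}^{M_{E_n}}(E_{n+k})$. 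The right-hand side lies in $\Ult(M_{E_n}, E_{n+k})$, which is an inner class of $M_{E_n}$, hence contained in $M_{E_n}$. Therefore $E_{n+k+1} \in M_{E_n}$, closing the induction.

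The main technical point is the invocation of Lemma \ref{lemma:subsets.of.Vlambda}(2) at each stage: it is what provides the absoluteness between the $V$-ultrapower and the $M_{E_n}$-ultrapower by $E_{n+k}$ needed to locate the iterated image $i_{E_{n+k}}^V(E_{n+k})$ inside $M_{E_n}$. Once this absoluteness is in hand, both strict descent and transitivity follow from the straightforward induction above; no further combinatorics on $\lambda$ is required.
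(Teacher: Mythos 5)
Your proposal is correct and follows essentially the same route as the paper: the same recursive definition $E_{n+1}=i_{E_n}(E_n)$, with transitivity obtained by applying Lemma \ref{lemma:subsets.of.Vlambda}(2) to $M=\Ult(V,E_n)$ to identify $i_{E_{n+k}}(E_{n+k})$ with $i_{E_{n+k}}^{M}(E_{n+k})\in M$. You merely spell out the full induction on $k$ where the paper treats $k=2$ and declares the rest analogous.
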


\begin{proof}
  Consider the
  following sequence of rank-to-rank extenders
  $(E_n \colon n < \omega)$. Let $E_0 = E$ and
  $E_{n+1} = i_{E_n}(E_n)$, where
  $i_{E_n} \colon V \rightarrow M_{E_n}$ is the canonical
  embedding associated to $E_n$. Then it is straightforward to check that every $E_n$ is a $V$-extender
  and $E_{n+1} \lhd E_n$ for all $n<\omega$.

  \begin{claim}\label{cl:transitivity}
    The Mitchell order is transitive on $(E_n \colon n < \omega)$.
  \end{claim}
  \begin{proof}
    Let $n<\omega$. We show that $E_{n+2} \lhd E_{n}$, the rest follows
    analogously. By construction $E_{n+2} \in M_{E_{n+1}}$ and
    $E_{n+1} \in M_{E_n}$, we argue that
    $E_{n+2} \in M_{E_n}$. Argue that $i_{E_{n+1}}(E_{n+1}) = i_{E_{n+1}}^{M_{E_n}}(E_{n+1})$,
    where $i_{E_{n+1}}^{M_{E_n}} \colon M_{E_n} \rightarrow M_{E_{n+1}}^{M_{E_n}}$.
    By Lemma \ref{lemma:subsets.of.Vlambda}, applied to $E = E_{n+1}$ and $M_{E_n}$, we see that $E_{n+2} = i_{E_{n+1}}(E_{n+1}) = i_{E_{n+1}}^{M_{E_n}}(E_{n+1})$, and hence $E_{n+2} \in M_{E_n}$.
  \end{proof}
\end{proof}

\section{Countable decreasing sequences in the Mitchell order}\label{sec:ctbleseq}

In this section we prove Theorem \ref{thm:ctbleseq} and show that there can be strictly decreasing transitive sequences in the Mitchell order of any countable length.

Before we turn to the proof, we would like to emphasize why we cannot simply extend the construction in Proposition \ref{prop:1stexample} to obtain decreasing sequences of extenders in the Mitchell order of arbitrary (countable) length. Let $E$ be a rank-to-rank extender and let $(E_n \colon n < \omega)$ be the corresponding sequence of extenders constructed in the proof of Proposition \ref{prop:1stexample}. The critical points of the extenders $E_n$ are strictly increasing with supremum $\lambda^E$. A natural next extender to consider is the image of $E$ under the direct limit embedding obtained by successively applying the $E_n$‘s (internally, i.e., $E_1$ is applied to $M_{E_0}$ and so forth). This, however, does not work outright as this direct limit embedding moves $\kappa$, the critical point of $E$, to $\lambda$. Instead, we will construct a sequence of extenders $(E_n^\prime \colon n < \omega)$ such that the direct limit embedding $j$ obtained by successively applying the $E_n^\prime$‘s (again, internally) has critical point strictly above $\kappa$. Now $j(E)$ is a rank-to-rank extender with critical point $\kappa$ and $\lambda^{j(E)} = \lambda$. Moreover, $j(E) \mo E_n^\prime$ for all $n<\omega$, so if we let $E_\omega^\prime = j(E)$, $(E_n^\prime \colon n \leq \omega)$ is a decreasing sequence of length $\omega+1$ in the Mitchell order. The proof of Theorem \ref{thm:ctbleseq} elaborates on this idea.

\begin{proof}[Proof of Theorem \ref{thm:ctbleseq}]
  Let $E$ be a rank-to-rank extender with critical sequence
  $(\kappa_n \st n<\omega)$ and $\lambda^E = \lambda$. In what follows, all extenders will have the same length $\lambda$. We start by introducing some notation for sequences of extenders as constructed in the proof of Proposition \ref{prop:1stexample}. For a rank-to-rank extender $F$, write $S^1(F) = i_{F}(F)$ and $S^{n+1}(F) = i_{S^n(F)}(S^n(F))$. Now, the decreasing sequences in the Mitchell order we construct will be of the following form.

  \begin{definition}\label{def:guidedbyinternaliteration}
    Let $\vec E = (E_\alpha \st \alpha < \gamma)$ be a sequence of rank-to-rank
    extenders. Then we say that \emph{$\vec E$ is
      guided by an internal iteration} iff there are well-founded
    models $(M_\alpha \st \alpha \leq \gamma)$,
    $(M_\alpha^* \st \alpha \leq \gamma)$ with $M_0 = M_0^* = V$ and
    elementary embeddings
    $j^*_{\alpha,\beta} \colon M_\alpha^* \rightarrow M_\beta^*$ for all
    $\alpha < \beta \leq \gamma$ such that
    \begin{enumerate}
    \item $E_{\alpha+1} = S^n(E_\alpha)$ for some $n \geq 1$ and all
      $\alpha+1 < \gamma$, 
      \item $M_{\nu} = M_\nu^*$ for all limit ordinals
      $\nu \leq \gamma$ is given as the direct limit of the directed system 
      $\l M_\alpha^* ,j^*_{\alpha,\beta} \mid \alpha \leq\beta < \nu\r$. 
      
       \item $M_{\alpha+1} = M_{E_\alpha}$ and
      $M_{\alpha+1}^*$ is the transitive collapse of $\Ult(M_\alpha^*,E_\alpha)$. 
      For notational convenience (see the diagram below) we will use $M_{\alpha+1}^N$ to denote the transitive collapse of $\Ult(N, E_\alpha)$, therefore $M_{\alpha+1}^* = M_{\alpha+1}^{M_\alpha^*}$.
    \item for all limit ordinals
      $\nu < \gamma$, $E_\nu = j^*_{0,\nu}(E_0)$ and $\Ult(V, E_\nu)$ is
      well-founded, and 
    \item the following diagram commutes and all maps in the diagram are given by internal ultrapowers.
          \end{enumerate}
  \end{definition}
      \begin{figure}[htb]\label{Diagram}
      \begin{tikzpicture}
        \draw[->] (0,0) node[left] {$V$}-- (0.6,0) node[right]
        {$M_1^*$};
        \draw[->] (1.4,0) -- (2,0) node[right] {$M_2^*$};
        \draw[->] (2.8,0) -- (3.4,0) node[right] {$M_3^*$};
        \draw[->] (4.2,0) -- (4.8,0) node[right] {$\cdots \;
          M_\omega$};
        \draw[->] (6.2,0) -- (6.8,0) node[right] {$M_{\omega+1}^*$};
        \draw[->] (8,0) -- (8.6,0) node[right] {$M_{\omega+2}^*$};
        \draw[->] (9.8,0) -- (10.4,0) node[right] {$\cdots \; M_\gamma^*$};
        
        \draw[->] (1,-1) node[below] {$V$} -- (1,-0.25);
        \draw[->] (2.4,-1) node[below] {$M_2$} -- (2.4,-0.25);
        \draw[->] (3.8,-1) node[below] {\footnotesize $M_3^{M_2}$} -- (3.8,-0.25);
        \draw[->] (7.4,-1) node[below] {$M_\omega$} -- (7.4,-0.25);
        \draw[->] (9.1,-1) node[below] {\footnotesize $M_{\omega+2}^{M_\omega}$} --
        (9.1,-0.25);

        \draw[->] (1.4,-1.25) -- (2,-1.25);
        \draw[->] (2.8,-1.25) -- (3.4,-1.25);
        \draw[->] (4.2,-1.25) -- (4.8,-1.25) node[right] {$\cdots$};
        \draw[->] (8,-1.25) -- (8.6,-1.25);
        \draw[->] (9.8,-1.25) -- (10.4,-1.25) node[right] {$\cdots$};

        \draw[->] (2.4,-2.35) node[below] {$V$} -- (2.4,-1.6);
        \draw[->] (3.8,-2.35) node[below] {$M_3$} -- (3.8,-1.6);
        \draw[->] (9.1,-2.35) node[below] {$M_\omega$} --
        (9.1,-1.6);

        \draw[->] (2.8,-2.6) -- (3.4,-2.6);
        \draw[->] (4.2,-2.6) -- (4.8,-2.6) node[right] {$\cdots$};
        \draw[->] (9.8,-2.6) -- (10.4,-2.6) node[right] {$\cdots$};

        \draw[->] (3.8,-3.7) node[below] {$V$} -- (3.8,-2.95);
        \draw (4.8,-3.3) node {$\ddots$};
        \draw (10.4,-3.3) node {$\ddots$};
      \end{tikzpicture}
      \end{figure}

   The term \emph{guided by an internal iteration} refers to the fact that the iteration $\l M_\alpha^*, j^*_{\alpha,\beta} \mid \alpha \leq \beta < \gamma\r$ is internal, as shown by the next claim. 
    \begin{claim}
      For each $\alpha < \gamma$, $E_\alpha \in M_\alpha^*$. 
    \end{claim}
    \begin{proof}
      The claim is immediate when $\alpha$ is a limit ordinal or $\alpha = 0$. 
      Let $\alpha = \beta +1$ be a successor ordinal
      and assume inductively that $E_\beta \in M_\beta^*$. 
      
      Suppose first that $E_\alpha =S^n(E_\beta)$ for $n = 1$. 
      Then $E_\alpha  = i_{E_\beta}(E_\beta)$, where
      $i_{E_\beta} \colon V \rightarrow M_{E_\beta}$ is the
      $V$-ultrapower embedding given by $E_\beta$.
      Recall that
      $M^*_\alpha = M^{M_\beta^*}_{E_\beta}$. 
      By applying Lemma \ref{lemma:subsets.of.Vlambda} to 
      $M = M_\beta^*$ and $E = E_\beta$,  we conclude that 
      $E_\alpha = i_{E_\beta}^{M_\beta^*}(E_\beta) \in M^{M_\beta^*}_{E_\beta} =
      M_\alpha^*$.
      We now know that $S^1(E_\beta) \in M_{\alpha}^*$, and so for 
      $n >1$, we can further compute $(S^{n-1})^{M_\alpha^*}(S^1(E_\beta))$ inside $M_{\alpha}^*$, which by applying Lemma \ref{lemma:subsets.of.Vlambda} $(n-1)$ consecutive times, results in 
      \[
      (S^{n-1})^{M_\alpha^*}(S^1(E_\beta)) = S^n(E_\beta) = E_{\alpha}.
      \] \end{proof}

  Before we prove the existence of sequences of extenders guided by
  internal iterations, we show an abstract claim which will allow us
  to extend any such sequence by one further element. This is shown as
  in the proof of Theorem 2.2 in \cite{St93} (where this particular
  argument is attributed to Martin). We sketch the proof here for the
  reader's convenience.

    \begin{claim}\label{cl:internalultrapowers}
      Let $N \cong \Ult(V,E')$ for some rank-to-rank extender $E'$ of length $\lambda$, and $E \lhd F$ rank-to-rank extenders of length $\lambda$ with $E,F \in N$. Then the following diagram commutes and all maps are given by internal ultrapowers.
      \begin{figure}[htb]
      \begin{tikzpicture}
        \draw[->] (-0.8,0) node[left] {$M_F^N$}-- (0.6,0) node[right]
        {$\tUlt(M_F^N, E) = \tUlt(M_E^N, i_E(F))$}; 

        \draw[->] (-1,-1.25) node[left] {$N$}-- (3.7,-1.25) node[right]
        {$M_E^N$};
       
        \draw[->] (-1.3,-1) -- (-1.3,-0.25);
        \draw[->] (4.1,-1) -- (4.1,-0.25);
      \end{tikzpicture}
      \end{figure}
    \end{claim}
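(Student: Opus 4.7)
The proof plan splits into two parts. First, I would verify that every ultrapower map appearing in the diagram is internal to its source model, using Lemma~\ref{lemma:subsets.of.Vlambda}. Second, I would invoke the standard Martin-style rearrangement to identify the two double ultrapowers and derive commutativity of the diagram.

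For the first part, since $M$ is an internal iterate of $V$ via a rank-to-rank extender of length $\lambda$, the discussion following Definition~\ref{def:rtr} supplies a decomposition $M=\bigcup_n N_n$ of transitive classes with ${}^{\kappa_n}N_n\subseteq N_n$ and $\sup_n\kappa_n=\lambda$. Applying Lemma~\ref{lemma:subsets.of.Vlambda}(2) to $M$ with the extender $F\in M$ (respectively $E\in M$) gives $i_F(x)=i_F^M(x)$ for every $x\in M$, so the ultrapower $\Ult(M,F)$ agrees with its $M$-internal version and the embedding $M\to\Ult(M,F)$ is internal; the same conclusion holds for $E$. The crucial further observation is that $\Ult(M,F)$ itself satisfies the hypothesis of the lemma: it is the union $\bigcup_n \Ult(M,F\uhr\kappa_n^F)$ of transitive classes closed under $\kappa_n^F$-sequences, and since $V_\lambda\subseteq\Ult(M,F)$, the extender $E$ belongs to $\Ult(M,F)$. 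Therefore Lemma~\ref{lemma:subsets.of.Vlambda}(2) applies once more with $\Ult(M,F)$ in place of $M$ and $E$ in place of the extender, showing that the top horizontal arrow $\Ult(M,F)\to\Ult(\Ult(M,F),E)$ is also internal. The symmetric argument handles the right vertical arrow, using that $i_E(F)\in\Ult(M,E)$ follows from $F\in M$ and elementarity of $i_E^M$.

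For the second part, I would invoke the standard Martin-style rearrangement of iterated ultrapowers: both $\Ult(\Ult(M,F),E)$ and $\Ult(\Ult(M,E),i_E(F))$ can be described by pairs $(a,b)\in[\lambda]^{<\omega}\times[\lambda]^{<\omega}$ together with two-variable functions $h\in M$, and the hypothesis $E\lhd F$ together with $E\in M$ allows one to swap the order in which one evaluates $E_a$ and $F_b$ inside $M$. This yields both the identification $\Ult(\Ult(M,F),E)=\Ult(\Ult(M,E),i_E(F))$ and commutativity of the diagram, since the two composite embeddings of $M$ into the common target agree on constant functions $c_x$ for $x\in M$. The main obstacle I anticipate is the first step, as it requires propagating the closure hypothesis of Lemma~\ref{lemma:subsets.of.Vlambda} through each ultrapower so that the lemma may be reapplied at every stage --- it is this lemma which collapses the distinction between external and internal ultrapowers and makes the diagram well-defined in the first place. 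The rearrangement itself, once all representing functions are seen to live in $M$, is routine and follows the Martin-style argument outlined in the proof of Theorem 2.2 of \cite{St93}.
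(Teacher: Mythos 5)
Your overall architecture matches the paper's proof --- identify $\Ult(\Ult(M,E),i_E(F))$ with $i_E(\Ult(M,F))$ by elementarity, then use Lemma~\ref{lemma:subsets.of.Vlambda} to collapse the external ultrapower of $\Ult(M,F)$ by $E$ onto the internal one --- but there is a genuine gap at the step where you claim that $E\in\Ult(M,F)$ ``since $V_\lambda\subseteq\Ult(M,F)$.'' That inference is false: $E$ is an extender of length $\lambda$, hence not an element of $V_\lambda$. What $V_\lambda\subseteq\Ult(M,F)$ gives you is only that every cutback $E\uhr\kappa_n^E$ lies in $\Ult(M,F)$, and since $\Ult(M,F)$ is not closed under $\omega$-sequences you cannot reassemble $E$ from its cutbacks inside that model. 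The hypothesis $E\lhd F$ says only that $E$ is represented in the ultrapower of $V$ by $F$ via some function $f\in V$, and this $f$ need not belong to $M$; so it is not automatic that $E$ lies in the ultrapower of $M$ by $F$.

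This is precisely the point of Lemma~\ref{lem:MOintheultrapower} in the paper, which is what the paper's proof of the claim invokes at this juncture (its argument applies verbatim with $M$ in place of $\Ult(V,E_0)$, using only that $M$ is an internal iterate with $E,F\in M$ and $V_\lambda\subseteq M$): one builds inside $M$ the tree of finite approximations $(\tau,n)$ to a representing function, notes that $f\in V$ yields a cofinal branch in $V$, and concludes by absoluteness of well-foundedness that $M$ sees a representing function as well, i.e.\ $E\in\Ult(M,F)$ as computed in $M$. If you replace your one-line justification by an appeal to that lemma, the rest of your argument goes through and is essentially the paper's: internality of the bottom and left maps via Lemma~\ref{lemma:subsets.of.Vlambda}(2), the identification $\Ult(\Ult(M,E),i_E(F))=i_E(\Ult(M,F))=\Ult(\Ult(M,F),E)$ via Lemma~\ref{lemma:subsets.of.Vlambda}(1), and commutativity checked on constant functions.
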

    \begin{proof}
      First, we note that $\tUlt(M_E^N, i_E(F)) = i_E(M_F^N)$, where we
      consider $M_F^N$ as a class of $N$. 
      Second, we assume $E \mo F$ in $V$, which by Lemma 
      \ref{lem:MOintheultrapower} implies that $E \mo F$ in $N$ as well, i.e.,  $E \in M_F^N$.
      We can therefore form the internal ultrapower of $M_F^N$ by $E$. 
      Let $i_E^{M_F^N} \colon M_F^N \rightarrow \tUlt(M_F^N,E)$ be the resulting ultrapower embedding.
      Finally, we have that every $x \in i_E(M_F^N)$ can be identified with equivalence  classes $[a,f]_E$, where $a \in [\lambda]^{<\omega}$ and
      $f \colon [\mu_a]^{|a|} \rightarrow M_F^N$, and by Lemma \ref{lemma:subsets.of.Vlambda},  $[a,f]_{E} = [a,g]_E$ for some function $g \in M_F^N$. 
       It follows that
      $i_E(M_F^N)$ identifies with the internal ultrapower
      $\Ult(M_F^N,E)$, and 
      $i_E \upharpoonright M_F^N$ with  $i_E^{M_F^N}$.
    \end{proof}

    Next, we argue that sequences as in Definition \ref{def:guidedbyinternaliteration} are in fact as desired.

  \begin{claim}
    If a sequence $\vec E = (E_\alpha \st \alpha < \gamma)$ of
    extenders is guided by an internal iteration, the embeddings in the
    diagram witness that $E_\beta \lhd E_\alpha$ for all
    $\alpha < \beta < \gamma$. In particular,  $\vec E$ is a decreasing transitive sequence in $\mo$.
  \end{claim}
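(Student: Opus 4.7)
The plan is to prove, by induction on $\beta < \gamma$, that $E_\beta \in M_{\alpha+1} = \Ult(V, E_\alpha)$ for every $\alpha < \beta$. By definition of the Mitchell order this is exactly the statement $E_\beta \lhd E_\alpha$. Once established for every pair, the sequence will be transitive in $\lhd$; strictness then follows from the standard observation that an extender is never an element of its own ultrapower.

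For the adjacent successor step $\beta = \alpha+1$, the relation $E_{\alpha+1} = S^n(E_\alpha) \lhd E_\alpha$ will follow directly from Proposition \ref{prop:1stexample} applied to $E_\alpha$. For a non-adjacent successor $\beta = \delta+1$ with $\alpha < \delta$, I would use the inductive hypothesis $E_\delta \in M_{\alpha+1}$ together with Lemma \ref{lemma:subsets.of.Vlambda}(2): applied with $M = M_{\alpha+1}$ and the extender $E_\delta \in M_{\alpha+1}$, the lemma gives $i_{E_\delta}(E_\delta) = i^{M_{\alpha+1}}_{E_\delta}(E_\delta)$, and the right-hand side is an element of $M_{\alpha+1}$ because it lies in the internal ultrapower class of $M_{\alpha+1}$ by $E_\delta$. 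Iterating $n$ times yields $E_\beta = S^n(E_\delta) \in M_{\alpha+1}$; this is exactly the argument used in Claim \ref{cl:transitivity} within the proof of Proposition \ref{prop:1stexample}, now driven by the inductive hypothesis.

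The limit step $\beta = \nu$ is what I expect to be the main obstacle. The approach would be to reproduce the iteration $V \to M_\nu^*$ inside $M_{\alpha+1}$, starting from $M_{\alpha+1}$ itself and applying the tail extenders $E_{\alpha+1}, E_{\alpha+2}, \dots$, all of which lie in $M_{\alpha+1}$ by inductive hypothesis. Repeated application of Claim \ref{cl:internalultrapowers} would show that the resulting internal iterate $M_\nu^{(\alpha)}$ coincides with the external $M_\nu^*$, and that the internal direct-limit embedding from $M_{\alpha+1}$ to $M_\nu^{(\alpha)}$ agrees with the composite $j^*_{\alpha+1,\nu} \circ i_{E_\alpha}$ on the image of $V$. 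The commutativity built into Definition \ref{def:guidedbyinternaliteration}(5) then identifies $E_\nu = j^*_{0,\nu}(E_0)$ with an element of $M_\nu^{(\alpha)}$; since $M_\nu^{(\alpha)}$ is an internally definable class in $M_{\alpha+1}$, its members are sets of $M_{\alpha+1}$, whence $E_\nu \in M_{\alpha+1}$.

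The delicate part of the limit step is ensuring that the tail sequence $(E_\delta)_{\alpha < \delta < \nu}$ is accessible inside $M_{\alpha+1}$ in a coherent enough way that the internal iteration can actually be formed there, and then verifying that the external direct-limit image $j^*_{0,\nu}(E_0)$ really coincides with the one produced by this internal construction. Both points rest on the commutativity part of Definition \ref{def:guidedbyinternaliteration} together with iterated applications of Claim \ref{cl:internalultrapowers}, which is precisely why the hypothesis of being guided by an internal iteration is needed; the vertical ultrapower embeddings $i_{E_\alpha} \colon V \to M_{\alpha+1}$ appearing in the diagram then serve as the explicit witnesses of $E_\beta \lhd E_\alpha$ for each $\alpha < \beta$.
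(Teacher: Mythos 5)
Your overall strategy is the one the paper uses: show that each $E_\beta$ lies in a model that $\Ult(V,E_\alpha)$ reaches by internal ultrapowers, with Lemma \ref{lemma:subsets.of.Vlambda} driving the successor steps and Claim \ref{cl:internalultrapowers} plus the commutativity of the diagram doing the rest. The paper packages this slightly differently --- it first proves the single subclaim that $E_\alpha\in M_\alpha^*$ for every $\alpha$ (successors exactly as in your argument, limits being immediate since $E_\nu=j^*_{0,\nu}(E_0)\in M_\nu^*$), and then observes that $M_\alpha^*$ is an internal iterate of $M_{\beta+1}=\Ult(V,E_\beta)$ by composing the \emph{vertical} maps of the diagram (from $M_{\beta+1}$ up to $M_{\beta+1}^*$) with the horizontal ones (from $M_{\beta+1}^*$ to $M_\alpha^*$). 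Your successor steps are correct and match the paper's.

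The limit step is where your write-up goes wrong as stated. Iterating $M_{\alpha+1}=\Ult(V,E_\alpha)$ by the tail extenders $E_{\alpha+1},E_{\alpha+2},\dots$ does \emph{not} reproduce $M_\nu^*$: already at the first stage, $\Ult(M_{\alpha+1},E_{\alpha+1})$ differs from $M_{\alpha+2}^*=\Ult(M_{\alpha+1}^*,E_{\alpha+1})$ whenever $\alpha\geq 1$, because $M_{\alpha+1}\neq M_{\alpha+1}^*$ (e.g.\ $M_2=\Ult(V,E_1)$ while $M_2^*=\Ult(\Ult(V,E_0),E_1)$). So the claimed identification $M_\nu^{(\alpha)}=M_\nu^*$, and with it the route to $E_\nu=j^*_{0,\nu}(E_0)\in M_\nu^{(\alpha)}$, fails. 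The repair is exactly what the two-dimensional diagram encodes: by Claim \ref{cl:internalultrapowers}, $M_{\alpha+1}^*$ is itself an internal iterate of $M_{\alpha+1}$ via ultrapowers by the \emph{images} of the earlier extenders (the vertical column above $M_{\alpha+1}$), and only after climbing that column does one iterate horizontally by $E_{\alpha+1},E_{\alpha+2},\dots$ to reach $M_\nu^*\ni E_\nu$. (Alternatively, your $M_\nu^{(\alpha)}$ is a superclass of $M_\nu^*$, with $M_\nu^*$ a further internal iterate of it; that also suffices, but it is not the coincidence you assert.) The remaining delicate point you flag --- accessibility of the tail system inside $M_{\alpha+1}$ at limit stages --- is genuine; the paper absorbs it into the hypothesis that all maps in the diagram are internal and secures it in the subsequent construction by choosing each block $\vec E^{n+1}$ to be $<_w$-minimal, so that the tail of the iteration belongs to the relevant model.
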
 
  
  \begin{proof}

	Given a finite ordinal $n < \omega$ and a countable ordinal $\alpha > n$, 
	the last Subclaim implies that $E_\alpha \lhd E_n$. This is since 
	 all maps in Diagram \ref{Diagram} witnessing that $\vec E$ is guided by an internal iteration, are internal ultrapowers, and therefore there are internal iterations from $M_{n+1}$ (the ultrapower of $V$ by $E_n$) to $M_{n+1}^*$ (i.e., vertical maps), and from $M_{n+1}^*$ to $M_\alpha^*$, from which $E_\alpha$ is taken.
	 
    More generally, given countable ordinals $\beta < \alpha$, we see 
    that $E_\alpha \lhd E_\beta$ by moving first to the last limit ordinal $\eta \leq \beta$ (i.e.,  $\beta = \eta +n$ for some $n < \omega$), and apply the proof of Claim
    \ref{cl:internalultrapowers} to show that the following diagram
    commutes and its top row is given by an internal iteration of
    $M_{E_{\eta+n}} = \tUlt(V, E_{\eta+n})$.

    \begin{figure}[htb]
      \begin{tikzpicture}
        \draw[->] (0,0) node[left] {$\tUlt(V, E_{\eta+n})$}-- (0.6,0) node[right]
        {$\tUlt(M^*_1, E_{\eta+n})$};
        \draw[->] (3.3,0) -- (3.9,0) node[right] {$\tUlt(M_2^*, E_{\eta+n})$};
        \draw[->] (6.6,0) -- (7.2,0) node[right] {$\cdots \;
          \tUlt(M^*_\eta, E_{\eta+n})$};

        \draw[->] (-1,-1.25) node[left] {$V$}-- (1.6,-1.25) node[right]
        {$M^*_1$};
        \draw[->] (2.4,-1.25) -- (4.9,-1.25) node[right] {$M_2^*$};
        \draw[->] (5.7,-1.25) -- (8.2,-1.25) node[right] {$\cdots \; M^*_\eta$};

        \draw[->] (-1.3,-1) -- (-1.3,-0.25);
        \draw[->] (2,-1) -- (2,-0.25);
        \draw[->] (5.3,-1) -- (5.3,-0.25);
        \draw[->] (9.2,-1) -- (9.2,-0.25);
      \end{tikzpicture}
    \end{figure}

    The extenders used to obtain the bottom row of the diagram are $E_0$, $E_1$, $E_2$, $\dots$, while the extenders used to obtain the top row of the diagram are 
    $i_{E_{\eta+n}}(E_0), i^{M^*_1}_{E_{\eta+n}}(E_1),i^{M_2^*}_{E_{\eta+n}}(E_2), \dots$.\\ 

    Finally, we get from $M_\eta^*$ to $M_{\eta+n}^*$ using the same argument described above, for $n < \alpha$. This suffices to show that
    $E_\alpha \lhd E_{\eta+n} = E_\beta$.
    \end{proof}

  We now turn to proving the following claim, which immediately
  yields the theorem. 

  \begin{claim}
    Assume there is a rank-to-rank extendere $E$. 
    For every countable ordinal $\gamma < \omega_1$ and an ordinal $\kappa < \lambda = \lambda^E$, there
    is a sequence of rank-to-rank extenders
    $\vec E = (E_\alpha \st \alpha \leq \gamma)$ 
    which is guided by an internal iteration, such that the induced embedding 
    $j^*_{0,\gamma} \colon V \rightarrow M_\gamma^*$ satisfies
    $\kappa < \crit(j^*_{0,\gamma}) < \lambda$ and
    $j^*_{0,\gamma}(\lambda) = \lambda$.
  \end{claim}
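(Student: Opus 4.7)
The plan is to proceed by transfinite induction on $\gamma < \omega_1$, $\gamma \geq 1$, explicitly building the sequence $(E_\alpha \st \alpha \leq \gamma)$. To achieve the critical-point requirement, I fix at the outset $n_0 < \omega$ with $\kappa_{n_0}^E > \kappa$ (possible since $\sup_n \kappa_n^E = \lambda > \kappa$) and take $E_0 = S^{n_0}(E)$, a rank-to-rank extender of length $\lambda$ with $\crit(E_0) \in (\kappa, \lambda)$. Along the construction, every $E_\beta$ will be rank-to-rank of length $\lambda$ and will satisfy $\crit(E_\beta) \geq \crit(E_0)$, which pins down $\crit(j^*_{0,\gamma}) = \crit(E_0) \in (\kappa,\lambda)$ and yields $j^*_{0,\gamma}(\lambda) = \lambda$.

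At a successor stage $\alpha \mapsto \alpha+1$, I would set $E_{\alpha+1} = S^1(E_\alpha)$ and invoke Lemma~\ref{lemma:subsets.of.Vlambda}(2) to identify $i_{E_\alpha}(E_\alpha)$ with $i_{E_\alpha}^{M_\alpha^*}(E_\alpha)$; the hypothesis $V_\lambda \subseteq M_\alpha^*$ needed to apply the lemma is maintained inductively because every $E_\beta$ is rank-to-rank of length $\lambda$. Rank-to-rankness of $E_{\alpha+1}$ is inherited from that of $E_\alpha$ in $M_\alpha^*$ by elementarity of the internal ultrapower, and the remaining commutativity conditions in Definition~\ref{def:guidedbyinternaliteration} are routine.

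The main obstacle lies at the limit step. At a countable limit $\mu \leq \gamma$, I form the direct limit $M_\mu^*$ and set $E_\mu = j^*_{0,\mu}(E_0)$, and must verify that $M_\mu^*$ is well-founded and that $\Ult(V, E_\mu)$ is well-founded. The latter is immediate: $\crit(E_\mu)$ is measurable (hence uncountable) in $V$ by elementarity from $\crit(E_0)$, so $E_\mu$ is countably complete. For the well-foundedness of $M_\mu^*$, I would use that $\cf(\mu) = \omega$: fix a cofinal $\omega$-sequence $(\mu_k \st k < \omega)$ in $\mu$ and argue that any purported descending $\in$-sequence in $M_\mu^*$ either localizes at some stage $M_{\mu_k}^*$, contradicting inductive well-foundedness, or, via a standard diagonal argument exploiting countable completeness of the extenders $E_{\mu_k}$, pulls back to a descending sequence at a bounded stage — again a contradiction. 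Once well-foundedness is established, the remaining conditions of Definition~\ref{def:guidedbyinternaliteration} propagate through the limit by elementarity, completing the induction.
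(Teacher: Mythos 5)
Your base and successor steps are essentially the paper's (the paper takes $E_{\alpha+1}=S^{n}(E_\alpha)$ with $n$ large rather than always $S^1$, which is mostly cosmetic at successors). The genuine gap is at the limit step, in the well-foundedness of the direct limit $M_\mu^*$. Your proposed dichotomy does not work: a descending $\in$-sequence in the direct limit whose representatives come from cofinally many stages neither ``localizes'' at a single $M_{\mu_k}^*$ nor pulls back via countable completeness --- countable completeness of an individual extender $E_{\mu_k}$ controls $\omega$-sequences of measure-one sets for that one extender, not an $\omega$-sequence of ordinals of the direct limit spread across infinitely many ultrapowers taken in different models. The standard tool here is the ``reflect a minimal counterexample'' argument, and it requires the iteration to be internal in a strong sense: each model along the way must contain the remainder of the iteration, so that the minimal witness to ill-foundedness can be reflected to a strictly smaller one inside the next model. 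Nothing in your construction guarantees this: the later blocks of extenders are chosen in $V$, and there is no reason they should be elements of the earlier models $M_{\alpha}^*$. The paper arranges exactly this by fixing a well-ordering $<_w$ of $H_{\lambda^+}$ and choosing each block $\vec E^{n+1}$ to be minimal with respect to $j^*_{0,\alpha_n}(<_w)$, which forces the tail $\langle \vec E^m \st m>n\rangle$, and hence the tail of the iteration, to lie in $M^*_{\alpha_n}$. This minimality device is the key idea your proposal is missing.

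Two further points are under-justified. First, well-foundedness of $\Ult(V,E_\mu)$ does not follow from ``$\crit(E_\mu)$ is uncountable, hence $E_\mu$ is countably complete'': uncountability of the critical point gives $\sigma$-completeness of each individual measure $(E_\mu)_a$, but well-foundedness of an extender ultrapower needs countable completeness of the extender as a whole (coherence across infinitely many coordinates $a_n$), which is not automatic. The paper instead exhibits an explicit elementary embedding $\pi\colon \Ult(V,E_\gamma)\to\Ult(M_\gamma,E_\gamma)$, $[a,f]\mapsto[a,\,j^*_{0,\gamma}(f)\circ j^*_{0,\gamma}]$, into a model already known to be well-founded. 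Second, $j^*_{0,\gamma}(\lambda)=\lambda$ does not follow from all critical points lying in $(\kappa,\lambda)$; one needs the critical points of the tail embeddings $j^*_{\alpha,\gamma}$ to be cofinal in $\lambda$. This is precisely why the paper states the claim with the parameter $\kappa$ and re-invokes the full inductive hypothesis at every limit stage, demanding $\nu_{n+1}>j^*_{0,\alpha_{n+1}}(\nu_n)$, whereas you use the parameter only once, at stage $0$. (Your $S^1$-scheme may in fact push the critical points cofinally in $\lambda$ along each $\omega$-block, but this requires an argument; as written it is an unsupported assertion.)
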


  \begin{proof}
    We prove the claim by induction on $\gamma < \omega_1$. There is nothing to show for $\gamma = 0$. 
    Suppose that $\gamma = 1$. Let $\kappa < \lambda$
    be arbitrary and fix some $n<\omega$ such that $\kappa < \kappa_n^E$. 
    Then $E_0 = S^{n+1}(E)$
    and $E_1 = S^1(E_0)$ giving rise to
    $j^*_{0,1} \colon V \rightarrow M_{E_{0}}$ are as desired.

    Now, suppose $\gamma = \alpha + 1$ and the claim holds for
    $\alpha$ witnessed by $(E_\nu \st \nu \leq \alpha)$. Let $n$ be
    such that $\kappa < \kappa_n^{E_\alpha}$, the $n$-th element of
    $E_\alpha$'s critical sequence, and
    $E_{\alpha+1} = S^n(E_\alpha)$. Then
    $\crit(E_{\alpha+1}) \geq \kappa_n^\alpha$ and hence, using that
    inductively $\crit(j^*_{0, \alpha}) > \kappa$, we have
    $\crit(j^*_{0, \alpha+1}) > \kappa$. Using Claim
    \ref{cl:internalultrapowers} the extended sequence
    $(E_\nu \st \nu \leq \alpha+1)$ is as desired.

    Finally, suppose $\gamma < \omega_1$ is a limit ordinal and  fix an increasing sequence $\l\alpha_n \mid n < \omega\r$, cofinal in $\gamma$, with $\alpha_0 = 0$. 
    We also fix a well-ordering $<_w$ of $H_{\lambda^+}$ in $V$. 
     
     By the inductive hypothesis
    applied to $\alpha_1$, there is a sequence $\vec E^0$ of rank-to-rank extenders which is guided by an internal iteration and
    an elementary embedding
    $j^*_{0,\alpha_1} \colon V \rightarrow M_{\alpha_1}^*$ with critical
    point $\nu_0 > \kappa$ and $j^*_{0,\alpha_1}(\lambda)=\lambda$. 
    We pick $\vec E^0$ to be the minimal such sequence with respect to $<_w$. 
    By
    elementarity of $j^*_{0,\alpha_1}$, we can apply the inductive
    hypothesis again inside $M_{\alpha_1}^*$ to get a sequence
    $\vec E^1$ of rank-to-rank extenders which is guided by an
    internal iteration and an elementary embedding
    $j^*_{\alpha_1,\alpha_2} \colon M_{\alpha_1}^* \rightarrow
    M_{\alpha_2}^*$ with critical point
    $\nu_1 > j^*_{0,\alpha_1}(\nu_0)$ and
    $j^*_{\alpha_1,\alpha_2}(\lambda)=\lambda$. 
    We take $\vec E^1$ to be the minimal such sequence in $M_{\alpha_1}^*$, with respect to 
    $j_{0,\alpha_1}^*(<_w)$. 
    Repeating this procedure yields a
    sequence $((\vec E^n, j^*_{\alpha_n,\alpha_{n+1}}) \st n < \omega)$
    of sequences of extenders together with elementary embeddings. 
    
    Let
    $(E_\alpha \st \alpha < \gamma)$ be the concatenation of the sequences 
    $\vec E^n$, $n < \omega$. 
    The choice of $\vec E^{n+1}$ to be minimal in $M_{\alpha_{n}}^*$ with respect to the well ordering $j_{0,\alpha_n}^*(<_w)$ guarantees that the 
    sequence $\l \vec E^m \mid m > n\r$ belongs to 
    $M_{\alpha_n}^*$, and thus also the tail of the iteration $\l M_{\alpha}^*, j^*_{\alpha,\beta} \mid \alpha_n< \alpha \leq \beta < \gamma\r$.  
    Note that $\crit(j^*_{0,\gamma}) > \kappa$ since
    $\crit(j^*_{\alpha_n, \alpha_{n+1}}) > \kappa$ for all $n < \omega$. 
      Let $j^*_{0,\gamma} \colon V \rightarrow M_\gamma^* = M_\gamma$ be the
    direct limit embedding of the system.
    
    The \emph{reflecting a minimal counterexample} argument used to show
    that internal iterations by normal ultrafilters are well-founded
    (see e.g., \cite[Theorem 19.7]{Je03} for normal ultrafilters or \cite[Proposition
    5.8]{Di18} for rank-to-rank extenders), can also be used to show that $M_\gamma$ is
    well-founded.

    \begin{subclaim}
      $j^*_{0,\gamma}(\lambda) = \lambda$.
    \end{subclaim}
    \begin{proof}
      Suppose not. Then there is some $\eta < \lambda$ such that
      $j(\eta) \geq \lambda$. But for every $\eta < \lambda$, there is
      by choice of the embeddings $j^*_{\alpha_n,\alpha_{n+1}}$ some
      $n < \omega$ such that $\nu_n = \crit(j^*_{\alpha_n,\alpha_{n+1}}) > \eta$,
      i.e. $j^*_{\alpha_n,\gamma}(\eta) = \eta$.
    \end{proof}

    Moreover, it is clear from the construction that, letting
    $E_\gamma = j^*_{0,\gamma}(E)$, the resulting sequence
    $(E_\alpha \st \alpha \leq \gamma)$ of rank-to-rank extenders is guided by an internal iteration. The only condition that
    needs a small argument is the following.

    \begin{subclaim}
      $\Ult(V,E_\gamma)$ is well-founded.
    \end{subclaim}
    \begin{proof}
      As $M_\gamma$ is well-founded and $E_\gamma$ is an extender in
      $M_\gamma$, $\Ult(M_\gamma, E_\gamma)$ is well-founded. We prove
      the subclaim by defining an elementary embedding $\pi \colon
      \Ult(V,E_\gamma) \rightarrow \Ult(M_\gamma, E_\gamma)$ as
      follows. For $[a,f]_{E_\gamma}^V \in \Ult(V,E_\gamma)$, let
      \[ \pi([a,f]_{E_\gamma}^V) = [a, j^*_{0,\gamma}(f) \circ
        j^*_{0,\gamma}]_{E_\gamma}^{M_\gamma}. \] 
        $\pi$ is
      well-defined since for $a \in [\lambda]^{<\omega}$,
      $\mu_a < \lambda$. Therefore,
      $j^*_{0,\gamma} \upharpoonright [\mu_a]^{|a|} \in V_\lambda
      \subseteq M_\gamma$. In addition,
      $j^*_{0,\gamma}(f) \circ j^*_{0,\gamma} = j^*_{0,\gamma} \circ f$, so
      $\pi$ is elementary.
    \end{proof}
  \end{proof}
  This finishes the proof of Theorem \ref{thm:ctbleseq}.
\end{proof}

\section{A bound on the length of decreasing sequences in the Mitchell
  order}\label{sec:omega1seq}

Steel proved in \cite{St93} that in a Mitchell order decreasing sequence of rank-to-rank extenders, the extenders cannot all have the same critical point. He conjectured the following stronger statement.

\begin{conjecture}[Steel]
  Suppose that $(E_m \st m < \omega)$ is a sequence of rank-to-rank extenders which is strictly decreasing in $\lhd$. Let $\lambda$ be the unique ordinal such that $\lambda = \lambda^{E_m}$ for all sufficiently large $m$. Then $\sup_{m < \omega} \crit(E_m) = \lambda$.
\end{conjecture}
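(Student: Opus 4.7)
The plan is to argue by contradiction. Suppose $(E_m)_{m<\omega}$ is strictly $\lhd$-decreasing, with $\lambda^{E_m} = \lambda$ eventually, and $\kappa^* := \sup_m \crit(E_m) < \lambda$. After trimming an initial segment, I may assume $\lh(E_m) = \lambda$ and $\crit(E_m) \leq \kappa^*$ for all $m$. The target is to contradict Steel's theorem from \cite{St93}, which rules out $\lhd$-decreasing $\omega$-sequences of rank-to-rank extenders with a common critical point.

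A first pigeonhole disposes of the easy subcase: if cofinitely many $\crit(E_m)$ share a common value $\gamma \leq \kappa^*$, then restricting to that tail yields an $\lhd$-decreasing $\omega$-sequence with constant critical point, directly contradicting Steel. Thus I may assume the $\crit(E_m)$ form a strictly increasing sequence with supremum $\kappa^*$. The real difficulty now is that Steel's theorem requires \emph{consecutive} terms to be $\lhd$-related, and taking a subsequence to flatten the critical points destroys this property because $\lhd$ is non-transitive in general.

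To sidestep the non-transitivity, I would introduce a refinement of $\lhd$ in the spirit of Goldberg's internal relation: declare $E <_{\mathrm{int}} F$ when a low-rank coding of the ultrapower map $i_E$ belongs to $\Ult(V,F)$ (in particular, $i_E \upharpoonright V_\lambda \in \Ult(V,F)$). The argument then has two parts. (i) \emph{Promotion}: using the rank-to-rank closure $V_\lambda \subseteq \Ult(V,E_m)$, the hypothesis $\crit(E_{m+1}) < \kappa^* < \lambda$, and Lemma \ref{lemma:subsets.of.Vlambda}, show that the function witnessing $E_{m+1} \in \Ult(V,E_m)$ can be replaced by one lying inside $\Ult(V,E_m)$, upgrading each $\lhd$-relation $E_{m+1} \lhd E_m$ to an internal relation $E_{m+1} <_{\mathrm{int}} E_m$. (ii) \emph{Well-foundedness}: prove that $<_{\mathrm{int}}$ has no infinite descending sequence on rank-to-rank extenders whose critical point is strictly below $\lambda$; a standard absoluteness-of-well-foundedness argument, applied to an internally recognisable version of the putative descending sequence, should suffice.

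The main obstacle is calibrating $<_{\mathrm{int}}$ precisely so that both steps succeed: strong enough for well-foundedness in (ii), yet weak enough to be implied by $\lhd$ under the standing hypothesis $\sup_m \crit(E_m) < \lambda$ in (i). This trade-off is exactly what the theory of the internal relation developed by Goldberg is designed to capture, and indeed a full implementation of this strategy (in considerably greater generality) is carried out in \cite{Go}; the outline above is meant to indicate the structural shape of the argument rather than a self-contained proof.
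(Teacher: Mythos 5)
Your proposal is an outline rather than a proof: both of its load-bearing steps are left as assertions. Step (i), the ``promotion'' of $E_{m+1}\lhd E_m$ to an internal relation, does not follow from Lemma \ref{lemma:subsets.of.Vlambda}. That lemma lets you replace a witnessing function $f:[\mu_a]^{|a|}\to M$ whose values already lie in $M=\Ult(V,E_m)$ by a function in $M$; but an extender of length $\lambda$ is an element of $V_{\lambda+1}$, not of $V_\lambda$, so the values of a function representing $E_{m+1}$ need not lie in $M$, and, more importantly, the internal relation you need concerns the action of $i_{E_{m+1}}$ on enough of $M$ to run a descent, not merely on $V_\lambda$. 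Showing that the Mitchell order implies a usable internal relation under the standing hypothesis $\sup_m\crit(E_m)<\lambda$ is precisely the hard content of Goldberg's proof, which you cite rather than reproduce. Step (ii), well-foundedness of $<_{\mathrm{int}}$ on the relevant class, is likewise only asserted (``should suffice''); note also that every rank-to-rank extender of length $\lambda$ has critical point strictly below $\lambda$, so the class you restrict to must be ``critical points bounded by a fixed $\kappa^*<\lambda$'' for the claim to have any chance. As written, your argument fully establishes only the easy subcase in which cofinitely many $E_m$ share a critical point, which is Steel's original theorem.

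For comparison: the paper does not prove the full conjecture either --- it is stated as a conjecture and the full result is attributed to Goldberg --- and it proves only the special case where $\lhd$ is \emph{transitive} on the sequence (Theorem \ref{thm2}). That proof is entirely elementary and quite different from your plan: take $\gamma_0$ minimal such that some transitive decreasing sequence has $\sup_m\crit(E_m)=\gamma_0<\lambda$; transitivity puts all $E_m$ for $m\ge 1$, together with their mutual Mitchell relations, inside $M_{E_0}$ (Lemma \ref{lem:MOintheultrapower}); a tree of finite approximations plus absoluteness of well-foundedness transfers the existence of such an infinite sequence into $M_{E_0}$; and since $\gamma_0<i_{E_0}(\kappa_n^{E_0})$, elementarity of $i_{E_0}$ reflects back to $V$ a counterexample with strictly smaller $\gamma$, contradicting minimality. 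If you are willing to add the transitivity hypothesis, this argument closes the gap with no appeal to the internal relation; without it, you need Goldberg's machinery in full, and your sketch does not supply it.
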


Theorem \ref{thm2} below establishes Steel's Conjecture for the special case that the Mitchell order is \emph{transitive} on the sequence $(E_m \st m < \omega)$.

\begin{theorem}\label{thm2}
 Suppose that $(E_m \st m < \omega)$ is a sequence of rank-to-rank extenders, which is strictly decreasing and transitive in $\lhd$. Let $\lambda$ be the unique ordinal such that $\lambda = \lambda^{E_m}$ for all sufficiently large $m$. Then $\sup_{m < \omega} \crit(E_m) = \lambda$.
\end{theorem}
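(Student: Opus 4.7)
The plan is to argue by contradiction, reducing ultimately to Steel's theorem from \cite{St93}, which rules out Mitchell-decreasing sequences of rank-to-rank extenders in which all critical points coincide. Assume for contradiction a transitive strictly $\lhd$-decreasing sequence $(E_m : m < \omega)$ of rank-to-rank extenders of common length $\lambda$ with $\kappa^* := \sup_m \crit(E_m) < \lambda$. A routine observation (any countable sequence of ordinals has either an infinite constant subsequence or an infinite strictly increasing subsequence), together with the fact that passing to a subsequence preserves both transitivity and strict $\lhd$-decreasingness, allows me to assume that $(\crit(E_m))_m$ is either constant or strictly increasing. The constant case is dispatched immediately by Steel's theorem.

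The substantive case is strictly increasing: $\crit(E_m) \nearrow \kappa^* < \lambda$. Fix $n_0 < \omega$ with $\kappa_{n_0}^{E_0} > \kappa^*$, so every $\crit(E_m)$ lies below $\kappa_{n_0}^{E_0}$. By transitivity, each $E_m$ is an element of $M_0 := \Ult(V,E_0)$, and Lemma \ref{lem:MOintheultrapower} ensures that $M_0$ still sees $(E_m)_{m \geq 1}$ as $\lhd$-decreasing and transitive. My central subgoal is to argue that the entire tail $(E_m)_{m \geq 1}$ lies in $M_0$ as a single object. Using the stratification $M_0 = \bigcup_n N_n$ from Lemma \ref{lemma:subsets.of.Vlambda}, together with the closure ${}^{\kappa_n^{E_0}} N_{n+1} \subseteq N_{n+1}$ and the boundedness of the critical points below $\kappa_{n_0}^{E_0}$, I would attempt, after possibly a further subsequence, to bound the generators $a_m$ in the $E_0$-representations $E_m = [a_m, f_m]_{E_0}$ below a single $\kappa_n^{E_0}$, placing the tail in $N_{n+1} \subseteq M_0$. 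Once the tail is internalized to $M_0$, the model recognizes the existence of a transitive strictly $\lhd$-decreasing $\omega$-sequence of rank-to-rank extenders with $\sup \crit < \kappa_{n_0}^{E_0}$; applying elementarity of $i_{E_0}$ and reflecting through the image of $\kappa_{n_0}^{E_0}$ should produce a counterexample in $V$ at a strictly smaller ordinal, yielding either an infinite descent or a contradiction with a minimal-counterexample choice.

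The main obstacle I anticipate is packaging the tail sequence as a single element of $M_0$: individually each $E_m$ lies in $M_0$ by transitivity, but the $\omega$-sequence as a whole need not, and this is precisely where the hypothesis $\kappa^* < \lambda$ must enter in an essential way. Without the boundedness of the critical points below some $\kappa_{n_0}^{E_0}$ one cannot, a priori, bound the generators in the $E_0$-representations below a single $\kappa_n^{E_0}$, and so the combinatorial step of arranging a uniform such bound (possibly by pigeonhole after a further subsequence) is where I expect the delicate work to concentrate.
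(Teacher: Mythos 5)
Your overall strategy---place the tail of the sequence inside $M_{E_0}=\Ult(V,E_0)$, use that the critical points are bounded below a fixed $\kappa_n^{E_0}$, and reflect via elementarity of $i_{E_0}$ to obtain a counterexample with strictly smaller supremum of critical points, contradicting a minimal-counterexample choice---is the paper's strategy. The gap sits exactly where you flag the ``main obstacle'': internalizing the tail $(E_m \st m\geq 1)$ as a single element of $M_{E_0}$. Your proposed fix is to bound the generators $a_m$ in the representations $E_m=[a_m,f_m]_{E_0}$ below a single $\kappa_n^{E_0}$ by pigeonhole after passing to a subsequence. But pigeonhole only applies if the least $n(m)$ with $E_m\in N_{n(m)}$ is bounded in $m$, and nothing in the hypotheses gives this: the assumption $\sup_m\crit(E_m)<\lambda$ constrains the critical points of the $E_m$, not where these objects (each of rank roughly $\lambda$, since each has length $\lambda$) sit in the stratification $M_{E_0}=\bigcup_n N_n$. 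If $n(m)\to\infty$, no subsequence helps, and the $\omega$-sequence may genuinely fail to belong to $M_{E_0}$, which is not closed under $\omega$-sequences. So the central step of your argument does not go through as stated.

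The paper circumvents internalizing the actual sequence. Working in $M_{E_0}$, it defines the tree $T^*$ of all \emph{finite} strictly $\lhd$-decreasing, transitive sequences of rank-to-rank extenders of length $\lambda$ whose critical points are bounded by $\gamma_0=\sup_m\crit(E_m)$. Every finite initial segment of $(E_m\st m\geq1)$ lies in $M_{E_0}$ (finite tuples of elements of $M_{E_0}$ are in $M_{E_0}$, and Lemma \ref{lem:MOintheultrapower} ensures $M_{E_0}$ verifies the relevant Mitchell order relations), so $T^*$ has a cofinal branch in $V$; by absoluteness of well-foundedness it has one in $M_{E_0}$ as well---a possibly different sequence, but with all the properties needed for the reflection step. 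Replacing your internalization step by this tree-plus-absoluteness argument repairs the proof; your preliminary reduction to constant versus strictly increasing critical points then becomes unnecessary, as the constant case is subsumed by the general argument.
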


\begin{proof}
Suppose otherwise, and let  $\gamma_0$ be the minimal ordinal  for which there exists a $\lhd$-decreasing and transitive sequence $\vec{E} = (E_m \st m < \omega)$ such that $\gamma_0 = \sup_{m<\omega} \kappa_0^{E_m}  < \lambda^{\vec{E}} = \sup_{m<\omega} \lambda^{E_m}$. We assume without loss of generality that $\lambda^{\vec{E}} = \lambda^{E_m}$ for all $m < \omega$.
Let $n < \omega$ be the integer for which $\kappa_n^{E_0} \leq \gamma_0 < \kappa_{n+1}^{E_0}$. 
We move to the ultrapower $M_{E_0}$. By our assumption, $E_m \in
M_{E_0}$ for every $m > 0$, and by Lemma \ref{lem:MOintheultrapower}, $M_{E_0}$ sees that $E_m$ is Mitchell order below $E_k$ for every $0 < k < m < \omega$.
Since $M_{E_0}$ is not closed under $\omega$-sequences of its elements (in $V$) the sequence $(E_m \mid 1 \leq m < \omega)$ need not belong to $M_{E_0}$. Nevertheless, we may define in $M_{E_0}$ the tree $T^*$ of all finite sequences of rank-to-rank extenders $(E_m^* \mid m < N)$, which are strictly Mitchell order decreasing, transitive, have length $\lambda^{\vec{E}}$, and satisfy 
$\gamma_0 \geq \max_{m<N} \kappa_0^{E_m^*}$. 
The sequence $(E_m \mid 1 \leq m < \omega)$ witnesses that $T^*$ has a cofinal branch in $V$. So by absoluteness of well-foundedness there is a cofinal branch  in $M_{E_0 }$ as well.
We can now reflect this from $M_{E_0}$ back to $V$. Using the fact that $\kappa_n^{E_0} \leq \gamma_0 < \kappa_{n+1}^{E_0} = i_{E_0}(\kappa_n^{E_0})$, we conclude  that in $V$, there exists some $\gamma_{-1} < \kappa_n^{E_0} \leq \gamma_0$, and a sequence $\vec{E}^*= (E_m^* \mid m < \omega)$ of rank-to-rank extenders which is strictly $\lhd$-decreasing and transitive such that
$\sup_{m<\omega} \kappa_0^{E_m^*} \leq \gamma_{-1} < \lambda^{\vec{E}^*}$. This is a contradiction to the minimality of $\gamma_0$.
\end{proof}

Now we can obtain Theorem \ref{thm:no.unctblseq} as a corollary.

\begin{proof}[Proof of Theorem \ref{thm:no.unctblseq}]
 Suppose otherwise. Let $\vec{E} = (E_\alpha \st \alpha < \omega_1)$ be a sequence of extenders which is strictly decreasing and transitive in the Mitchell order. We may assume that all $E_\alpha$ are rank-to-rank extenders and that there exists some $\lambda^{\vec{E}}$ such that $\lambda^{E_\alpha} = \lambda^{\vec{E}}$ for all $\alpha < \omega_1$. In particular, $\cf(\lambda^{\vec{E}}) = \omega$ and we may choose a cofinal sequence $(\rho_n \st n < \omega)$ in $\lambda^{\vec{E}}$. By a straightforward pressing down argument, we can find an uncountable set $I \subseteq \omega_1$ and some $n^* < \omega$ such that
 $\kappa_0^{E_\alpha} < \rho_{n^*}$ for all $\alpha \in I$. 
 Taking $(\alpha_n \st n < \omega)$ to be the first $\omega$ many ordinals of $I$, it follows that $(E_{\alpha_n} \st n < \omega)$ is strictly decreasing and transitive in the Mitchell order, with $\sup_{n<\omega} \kappa_0^{E_{\alpha_n}} \leq \rho_{n^*} < \lambda^{\vec{E}}$. This contradicts Theorem \ref{thm2}. 
\end{proof}

\section{Questions}

After studying the length of the Mitchell order for rank-to-rank extenders, a natural question that arises is about the structure this order can have. 

\begin{question}
  Suppose there is a rank-to-rank extender. Can the tree order on the infinite binary tree $2^{<\omega}$ be realized by a Mitchell order? 
\end{question}

We can even ask the following more general question.

\begin{question}
  Suppose there is a rank-to-rank extender. Can any tree order on $\omega$ be realized by a Mitchell order?
\end{question}

\bibliographystyle{plain}
\bibliography{References}

\end{document}